\newif\ifinfootnote
\let\footnoteasusual\footnote
\renewcommand{\footnote}[1]
{\infootnotetrue\footnoteasusual{#1}\infootnotefalse}
\newif\ifdebug
\newcommand{\N}{\mathbb{N}}
\newcommand{\Z}{\mathbb{Z}}
\newcommand{\Q}{\mathbb{Q}}
\newcommand{\R}{\mathbb{R}}
\newcommand{\C}{\mathbb{C}}
\DeclareMathOperator{\conv}{conv}
\DeclareMathOperator{\vol}{vol}
\def \GL {\mathsf{GL}}
\def \Aff {\operatorname{Aff}}
\def \supp {\operatorname{supp}}
\def \calW {{\mathcal W}}
\def \calO {{\mathcal O}}
\def \tQ {\tilde{Q}}
\setlist{topsep=0pt,itemsep=6pt}
\numberwithin{equation}{section}
\newtheorem {theorem}[equation]         {Theorem}
\newtheorem {lemma}[equation]           {Lemma}
\newtheorem {claim*}                    {Claim}
\newtheorem {corollary} [equation]      {Corollary}
\newtheorem {proposition}  [equation]   {Proposition}
\newtheorem*{proposition*}{Proposition}
\theoremstyle{definition}
\newtheorem{definition}[equation]{Definition}
\newtheorem{notation}[equation]{Notation}
\theoremstyle{remark}
\newtheorem{remark}[equation]{Remark}
\newtheorem*{remark*}{Remark}
\begin{document}

\title{Integral points and volume of integral-integral affine manifolds}
\author{Oded Elisha}
\address{School of Mathematical Sciences, Tel Aviv University}
\email{odedelisha@mail.tau.ac.il}
\author{Yael Karshon}
\address{School of Mathematical Sciences, Tel Aviv University}
\email{yaelkarshon@tauex.tau.ac.il}
\author{Yiannis Loizides}
\address{Department of Mathematical Sciences, George Mason University}
\email{yloizide@gmu.edu}

\date{\today}

\keywords{Integral affine structure, Ehrhart}

\subjclass[2020]{Primary 53C15; Secondary 52B20, 53D50}
\begin{abstract}
    We give an elementary proof that, for a closed manifold with an integral-integral affine structure, its total volume and number of integral points coincide.
    The proof uses rational Ehrhart theory and elementary Fourier analysis to estimate the difference between the total volume and the number of integral points.
\end{abstract}

\maketitle

\section{Introduction}
\label{sec:intro}

On an $n$-dimensional manifold $M$, an atlas whose transition maps
locally have the form
\begin{equation} \label{eq:iia}
x \mapsto Ax +b
\quad 
\text{ with $A \in \GL(n,\Z)$ and $b \in \Z^n$ } 
\end{equation}
defines an \emph{integral-integral affine structure}
\cite{hamilton_integral-integral_2024}
(also called a \emph{strongly integral affine structure}
\cite{sepe}).
This structure determines a measure on $M$
and a subset $M_\Z \subset M$ of \emph{integral points}.
(See Definition~\ref{def:ii},
Remark~\ref{rk:maximalii},
Remark~\ref{measure_atlas},
and Definition \ref{def:MZ}.)

For example,
if $\Gamma$ is group of transformations of $\R^n$
of the form \eqref{eq:iia} whose action on $\R^n$ is free and proper
then the quotient $M := \R^n/\Gamma$ inherits an integral-integral
affine structure.
We do not know if every compact integral-integral affine manifold
must be isomorphic to such a quotient;
this is a special case of the Markus conjecture \cite{markus}
(see the discussion in~\cite{goldman_geometric_2022}).

In this paper we give an elementary proof of the following theorem:

\begin{theorem} \label{main}
Let $M$ be a compact integral-integral affine manifold.
Then its number of integral points is equal 
to its total volume.
\end{theorem}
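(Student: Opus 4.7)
My plan is to rescale the lattice and let it become fine. For each positive integer $k$, the sublattice $\frac{1}{k}\Z^n\subset\R^n$ is preserved by every transition map of an integral-integral affine atlas, since $\GL(n,\Z)$ preserves $\frac{1}{k}\Z^n$ and translations by $\Z^n\subset\frac{1}{k}\Z^n$ also do. Hence the structure on $M$ determines a well-defined finite set of ``$\tfrac{1}{k}$-integral'' points; write $N(k)$ for its cardinality. The theorem is $N(1)=\vol(M)$, and I would aim for the stronger identity $N(k)=k^n\vol(M)$ for every positive integer $k$.

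The analytic half of the proof uses a partition of unity and Poisson summation. Fix a finite cover of $M$ by integral-integral affine charts $\varphi_i\colon U_i\to V_i\subset\R^n$ and a smooth partition of unity $\{\rho_i\}$ subordinate to it; let $\tilde\rho_i:=(\varphi_i^{-1})^*\rho_i$, extended by zero, so $\tilde\rho_i\in C_c^\infty(\R^n)$. Then
\[
N(k)=\sum_i\sum_{x\in\frac{1}{k}\Z^n}\tilde\rho_i(x),\qquad \vol(M)=\sum_i\int_{\R^n}\tilde\rho_i(x)\,dx.
\]
Applying Poisson summation to each $\tilde\rho_i$ along the rescaled lattice and separating the zero frequency,
\[
N(k)=k^n\,\vol(M)+k^n\sum_{m\in\Z^n\setminus\{0\}}\sum_i\widehat{\tilde\rho_i}(km).
\]
Because each $\tilde\rho_i$ is smooth and compactly supported, its Fourier transform is rapidly decaying, so the remainder is $O(k^{-N})$ for every $N$; i.e.\ $N(k)-k^n\vol(M)=O(k^{-\infty})$ as $k\to\infty$.

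The combinatorial half is where I expect the main difficulty. I would argue that $N(k)$ is in fact a polynomial, not merely a quasi-polynomial, in $k$, by decomposing $M$ into finitely many closed cells whose images in charts are rational polytopes, applying rational Ehrhart theory to each cell, and exploiting that $M$ is a closed manifold so that every face of codimension $\ge 1$ appears in two adjacent cells with opposite orientations. The purely periodic parts of the local Ehrhart quasi-polynomials come from lower-dimensional faces, and should cancel in pairs under this gluing, leaving an honest polynomial in $k$. Once $N(k)$ is a polynomial that agrees with $k^n\vol(M)$ up to $O(k^{-\infty})$, the two expressions must coincide, and setting $k=1$ gives the theorem. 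The crux is verifying this cancellation: one needs the local rational Ehrhart contributions to be natural enough to transform correctly under the $\GL(n,\Z)\ltimes\Z^n$ change-of-chart and then to cancel when the cells reassemble into the closed manifold $M$.
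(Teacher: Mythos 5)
Your analytic half is exactly the paper's argument: partition of unity subordinate to a finite atlas, Poisson summation applied to each $\tilde\rho_i$, rapid decay of the Fourier transform, giving $N(k)=k^n\vol(M)+O(k^{-\infty})$. (One small point you handle correctly: well-definedness of $N(k)$ needs the observation that a point whose coordinates lie in $\frac1k\Z^n$ in one chart has this property in every chart, which you note at the outset.) The combinatorial half, however, has a genuine gap, and it is precisely the step you flag as ``the crux.'' You want to prove that $N(k)$ is an honest polynomial by (i) producing an actual cell decomposition of $M$ into rational polytopes and (ii) arguing that the periodic parts of the local Ehrhart quasi-polynomials cancel when adjacent cells are glued along shared faces. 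Neither step is carried out, and neither is routine: constructing a genuine polyhedral decomposition of an arbitrary closed integral-integral affine manifold is already delicate, and the claimed cancellation of periodic parts along shared faces is essentially the hard content of the theorem in disguise --- the periodic lower-order coefficients of a rational Ehrhart quasi-polynomial do not simply pair off with opposite signs on the two sides of a facet, and tracking how lattice points on shared faces are counted requires the same kind of careful bookkeeping that made the original proof of this theorem nontrivial. As written, your argument assumes the conclusion of the hard step rather than proving it.

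The good news is that the gap can be closed without proving polynomiality at all, and this is what the paper does. It suffices to show that $N(k)$ is a \emph{quasi-polynomial} in $k$, because a quasi-polynomial that is $o(1)$ as $k\to\infty$ vanishes identically (restrict to each residue class modulo a common period to reduce to the statement for ordinary polynomials); combined with your asymptotic $N(k)-k^n\vol(M)=o(1)$ and the fact that the difference of a quasi-polynomial and the polynomial $k^n\vol(M)$ is again a quasi-polynomial, this forces $N(k)=k^n\vol(M)$ exactly. Quasi-polynomiality of $N(k)$ is obtained without any cell decomposition: cover $M$ by finitely many closed rational polytopes $P_1,\dots,P_\ell$ sitting inside charts drawn from a good cover (so that all finite intersections of the chart domains are connected, which guarantees each $\bigcap_{i\in I}P_i$ is again a rational polytope in a single chart), and apply inclusion--exclusion together with rational Ehrhart theory to each intersection. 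This replaces your orientation-cancellation argument with a purely formal one and is the decisive simplification you are missing.
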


Our motivation for this project comes from geometric quantization.
Roughly, geometric quantization provides recipes to pass from a symplectic
manifold $(X,\omega)$,
which serves as a model for phase space in classical mechanics,
to a Hilbert space $\mathcal{H}$, whose elements we view as wave functions
in the corresponding quantum mechanical system.
Such recipes involve the choice of a \emph{polarization} on $X$.
Properties of the quantum mechanical system often turn out to be independent
of this choice; this phenomenon
is called ``independence of polarization''.

One example of a polarization is a compatible K\"ahler
structure on $X$; some recipes allow
a compatible \emph{almost} complex structure;
the dimension of the corresponding Hilbert space is typically given by the
\emph{Riemann-Roch number}.
Another example of a polarization
is a Lagrangian fibration $X \to B$.
In the presence of prequantization, the base $B$ then acquires an
integral-integral affine structure,
and the dimension of the corresponding Hilbert space
that arises from this polarization
is given by the number of integral points.
The Riemann-Roch number of $(X,\omega)$ then coincides with the volume
of $B$; thus, \Cref{main} becomes an instance
of ``independence of polarization''.

Theorem \ref{main} appeared as the ``Downstairs Theorem'' 
in \cite{hamilton_integral-integral_2024}.
Its proof was surprizingly nontrivial,
and involved intersection theory on a bundle of tori over $M$.
(Warning: in \cite{hamilton_integral-integral_2024}, the integral-integral manifold
was called $B$; the letter $M$ was used for various types of torus bundles over $B$.)
The proof in \cite{hamilton_integral-integral_2024}
was inspired by ideas 
that were sketched inside papers on geometric quantization 
and on mirror symmetry 
(\cite[Proof of Thm.~4.1]{andersen}; \cite[Conjecture 4.9]{gross});
the details turned out to require a subtle cohomological argument,
which was provided in  \cite{hamilton_integral-integral_2024}.
In light of this earlier work,
the elementary proof that we provide in this current paper 
is particularly satisfying. 

In this paper we provide a more elementary proof
of Theorem \ref{main}
that relies on rational Ehrhart theory in $\R^n$ \cite{ehrhart, sinai_robins}.
Here is a sketch of our argument.
We introduce the \emph{lattice count} function,
which associates to each $m \in \N$ the number $L_M(m)$ of points in $M$ 
whose coordinates are in $m^{-1}\Z$.
For a rational convex polytope $P$ in $\R^n$,
the number of points in $P \cap m^{-1}\Z^n$ 
is quasi-polynomial in $m$ (Definition~\ref{quasi-polynomial}), 
by rational Ehrhart theory.
For a compact integral-integral affine manifold $M$,
we do not know how to decompose $M$ into rational convex polytopes;
however, we can cover $M$ by overlapping rational convex polytopes,
and apply an inclusion-exclusion argument,
to conclude that $L_M(m)$ is a quasi-polynomial in $m$.
On the other hand,
by ``decomposing'' $M$ using a smooth partition of unity supported in charts,
we conclude that $L_M(m) - \text{vol}(M) m^n = o(1)$
as $m \to \infty$.
From these two results, we conclude that $L_M(m) = \text{vol}(M) m^n$
on the nose, yielding Theorem \ref{main}.

\subsection*{Acknowledgement}
We are grateful to Roei Raveh for providing
the idea of the proof of Proposition~\ref{riemman_sum}.

Y.\ Karshon's research  is partly funded
by the Natural Sciences and Engineering Research Council of Canada Discovery RGPIN-2024-05798
and  by the United States -- Israel Binational Science
Foundation Grant 2021730.
O.\ Elisha is partly supported by Yaron Ostrover's
ISF Grant 938/22.

\section{Preliminaries}
\label{sec:prelim}

\begin{definition}
    Let $M$ be a (smooth) $n$ dimensional manifold. An open cover $\mathcal{U}\coloneq\{U_i\}_{i\in I}$ of $M$ is called a \textbf{good cover} if every nonempty finite intersection $\bigcap_{j=0}^m{U_j}$ of sets $U_j$ from $\mathcal{U}$ is diffeomorphic to $\R^n$.
\end{definition}

\begin{notation}
We denote by $\textbf{Aff}_{\Z}(\R^n)$
the set of maps $\R^n \to \R^n$ of the form 
$x \mapsto Ax +b$ with $A \in \GL(n,\Z)$ and $b \in \Z^n$.
\end{notation}

\begin{definition}
A map $f \colon \Omega \rightarrow \R^n$ on an open subset $\Omega \subseteq \R^n$ is called \textbf{locally integral-integral affine} if 
each point has a neighbourhood on which the map
agrees with an element of $\Aff_\Z(\R^n)$.
\end{definition}

\begin{proposition}
    \label{locally_affine_connectedness}
Let $f \colon \Omega \rightarrow \R^n$ be a locally integral-integral affine map on an open subset $\Omega$ of $\R^n$.
If $\Omega$ is connected,
then $f$ is the restriction to $\Omega$
of some element of $\Aff_\Z(\R^n)$.
\end{proposition}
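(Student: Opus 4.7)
The plan is a standard connectedness argument. Fix a basepoint $x_0 \in \Omega$; by hypothesis there exists $g_0 \in \Aff_\Z(\R^n)$ that agrees with $f$ on some open neighbourhood of $x_0$. Define
\[
S = \{\, x \in \Omega : \text{there exists an open neighbourhood } U \ni x \text{ with } f|_U = g_0|_U \,\}.
\]
I want to show $S = \Omega$, so that $f = g_0|_\Omega$.

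The set $S$ is nonempty (it contains $x_0$) and open by construction. The main step is to show $S$ is closed in $\Omega$. For this, take a point $x \in \Omega$ that lies in the closure of $S$. By the local integral-integral affine hypothesis, there is an open neighbourhood $V$ of $x$ and some $g \in \Aff_\Z(\R^n)$ with $f|_V = g|_V$. Since $x$ is a limit of points in $S$, there exists some point $y \in S \cap V$. At $y$, there is an open neighbourhood $U \subseteq V$ on which $f = g_0$. On $U$ we then have $g_0 = f = g$, i.e.\ two affine maps from $\R^n$ to $\R^n$ agree on a nonempty open subset; elementary linear algebra (or identifying affine maps with their values at $n+1$ affinely independent points) forces $g = g_0$ globally. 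Therefore $f = g_0$ on $V$, which shows $x \in S$, and so $S$ is closed.

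Since $\Omega$ is connected and $S$ is a nonempty clopen subset, $S = \Omega$, which means $f$ is the restriction of $g_0 \in \Aff_\Z(\R^n)$.

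I do not anticipate any serious obstacle; the only care needed is to remember that two affine maps agreeing on a nonempty open set must be equal as maps $\R^n \to \R^n$, and to organise the neighbourhoods so that the open set on which $g_0$ and $g$ agree is genuinely nonempty (this is arranged by choosing the neighbourhood $U$ of $y$ to sit inside the neighbourhood $V$ of $x$ provided by the local affineness at $x$).
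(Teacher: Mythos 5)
Your proof is correct, but it takes a different route from the paper's. You run a clopen (connectedness) argument directly at the level of the affine maps themselves: the key rigidity fact is that an element of $\Aff_\Z(\R^n)$ is determined by its restriction to any nonempty open set, so the locus where $f$ locally agrees with a fixed $g_0$ is open and closed in $\Omega$. The paper instead differentiates: the partial derivatives of $f$ are locally constant on the connected set $\Omega$, hence constant, giving a single matrix $A \in \GL_n(\Z)$; then $x \mapsto f(x) - Ax$ has vanishing derivatives and is therefore constant. The paper's argument is shorter because the derivative instantly extracts the linear part, but it quietly relies on the same connectedness principle (``locally constant on a connected set implies constant''). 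Your version is purely topological and would transfer verbatim to any pseudogroup of locally rigid maps (real-analytic, projective, etc.) where agreement on a nonempty open set forces global agreement, whereas the differential argument is specific to the affine setting. One small bookkeeping point in your write-up: the neighbourhood $U$ witnessing $y \in S$ need not a priori sit inside $V$; you should replace it by $U \cap V$, which is still a nonempty open set containing $y$, before concluding $g = g_0$. You flag this yourself at the end, so the proof stands.
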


\begin{proof}
Write $f=(f_1,\ldots,f_n)$.
The partial derivatives $\partial _i f_j$ are locally constant maps
on the connected open set $\Omega$, so they are constant. Let $A\in \mathsf{GL}_n(\Z)$ be the matrix of partial derivatives. Then $x \mapsto f(x)-Ax$ is a function with vanishing partial derivatives on the connected open set $\Omega$, so it is constant.
\end{proof}

\begin{definition} \label{def:ii}
    Let $M$ be an $n$ dimensional manifold.
    An atlas \[\{\varphi_i \colon U_i\rightarrow \Omega_i \subseteq \R^n\}_{i\in I}\] for M is called an \textbf{integral-integral affine atlas} 
    if $\varphi_j\circ \varphi_i^{-1}$ is locally integral-integral affine 
    on $\varphi_i(U_i \cap U_j)$ for every $i,j\in I$. 
The integral-integral affine atlases are ordered by inclusion.
An \textbf{integral-integral affine structure} on a manifold $M$ is a maximal integral-integral affine atlas.
An \textbf{integral-integral affine manifold}
is a manifold equipped with an integral-integral affine structure.
    An \textbf{integral chart} is an element of the maximal integral-integral affine atlas of $M$. 
\end{definition}

\begin{remark} \label{rk:maximalii}
Every integral-integral affine atlas is contained in a unique maximal integral-integral affine atlas. 
This follows from the fact that locally integral-integral affine
homeomorphisms between open subsets of $\R^n$
form a \emph{pseudogroup}.
\end{remark}

\begin{remark}
    \label{measure_atlas}
    Let $M$ be an $n$ dimensional integral-integral affine manifold. Since locally integral-integral affine maps preserve the standard Lebesgue measure on $\R^n$, there exists a unique {(Borel-)measure} on $M$ such that each integral chart is measure preserving.
    Integration on an integral-integral affine manifold is then done with respect to this measure.
\end{remark}

\begin{definition}
\label{polytope_def}
    A set $P\subseteq \R^n$ is called a \textbf{rational polytope} if there exist $v_1,\dots,v_k \in \Q^n$, such that 
    $P=\conv\{v_1,\dots,v_k\}=\{\sum_{i=1}^k{\alpha_iv_i}\mid \alpha_i \ge 0,\; \sum_{i=1}^k{\alpha_i}=1\}$.
    
    A point $x\in P$ is called a \textbf{vertex} of $P$ if $x$ is an extreme point in P.
    That is, if $x=\lambda x_1+(1-\lambda) x_2$ for $0 \le \lambda \le 1$ where $x_1,x_2\in P$ and $x_1\neq x_2$, then $\lambda=0$ or $\lambda =1$.
\end{definition}
\begin{remark}
    If $P=\conv\{v_1,\dots,v_k\}$, then the set of vertices of $P$ is a subset of $\{v_1,\dots,v_k\}$.
\end{remark}

The set of natural numbers is 
\[\N \coloneq \{ 1, 2, \ldots \}.\]

\begin{definition}  \label{quasi-polynomial}
A function $f \colon \N \rightarrow \R$ is \textbf{quasi-polynomial} if
there exist $n \in \N \cup\{0\}$
and functions $\{a_k \colon \N \rightarrow \R\}_{k=0}^n$ such that
\begin{itemize}
    \item Each $a_k$ is periodic with integer period, and
    \item $\displaystyle f(m)=\sum_{k=0}^n{a_k(m)m^k}$.
\end{itemize}
\end{definition}

\begin{definition}
    \label{lattice_count_function}
    For a rational polytope $P\subseteq \R^n$, we define its \textbf{lattice count function} by
    \begin{align*} &L_P \colon \N \rightarrow \N \cup \{0\}\\
        &L_P(m)\coloneq\#((m^{-1}\Z^n) \cap P) = |\{x \in P \mid mx \in \Z^n \}| \,.
    \end{align*}
\end{definition}

\begin{definition} \label{def:MZ}
Let $M$ be an $n$-dimensional integral-integral affine manifold. Its set of \textbf{integral points} is
\begin{multline*}
        M_{\Z}\coloneq \{x\in M \mid \text{ there exists 
        an integral chart } \psi \colon U\rightarrow \Omega \subseteq \R^n \\
        \text{ whose domain contains $x$ and } \psi(x)\in \Z^n\}.
\end{multline*}
Moreover, for $m \in \N$, denote
\begin{multline*}
        M_{m^{-1}\Z}\coloneq \{x\in M \mid \text{ there exists 
        an integral chart } \psi \colon U\rightarrow \Omega \subseteq \R^n \\
        \text{ whose domain contains $x$ and } \psi(x)\in m^{-1}\Z^n\}.
\end{multline*}

\end{definition}

\begin{lemma}
\label{if one then all}
For an $n$-dimensional integral-integral affine manifold $M$, and for $m \in \N$, 
\begin{multline*}
        M_{m^{-1}\Z} = \{x\in M \mid \text{ for every integral chart } \phi \colon U\rightarrow \Omega \subseteq \R^n \text{ whose domain contains $x$, } \\ \text{ we have } \phi(x)\in m^{-1}\Z^n\}.
\end{multline*}
\end{lemma}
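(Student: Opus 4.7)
The plan is to prove the two inclusions. The inclusion $\supseteq$ is immediate once we observe that every point of $M$ lies in the domain of at least one integral chart: if every integral chart $\phi$ whose domain contains $x$ sends $x$ into $m^{-1}\Z^n$, then in particular \emph{some} integral chart does, so $x \in M_{m^{-1}\Z}$.

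For the nontrivial inclusion $\subseteq$, I would start with $x \in M_{m^{-1}\Z}$ witnessed by an integral chart $\psi \colon U \to \Omega$ with $\psi(x) \in m^{-1}\Z^n$, and let $\phi \colon V \to \Omega'$ be an arbitrary integral chart whose domain contains $x$. Since both charts belong to the maximal integral-integral affine atlas, the transition map $\phi \circ \psi^{-1}$ is locally integral-integral affine on $\psi(U \cap V)$. Hence on a sufficiently small connected open neighborhood $W \subseteq \psi(U \cap V)$ of $\psi(x)$, Proposition~\ref{locally_affine_connectedness} tells us that $\phi \circ \psi^{-1}$ agrees on $W$ with an element of $\Aff_\Z(\R^n)$, that is, with a map $y \mapsto Ay + b$ for some $A \in \GL(n,\Z)$ and $b \in \Z^n$.

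The key observation is then that $\Aff_\Z(\R^n)$ preserves the lattice $m^{-1}\Z^n$: if $y \in m^{-1}\Z^n$, then $my \in \Z^n$, so $A(my) \in \Z^n$ (as $A$ has integer entries), so $Ay \in m^{-1}\Z^n$; and $b \in \Z^n \subseteq m^{-1}\Z^n$, so $Ay + b \in m^{-1}\Z^n$. Applying this to $y = \psi(x) \in m^{-1}\Z^n$ yields $\phi(x) = A\psi(x) + b \in m^{-1}\Z^n$, as required.

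I do not foresee a real obstacle here; the proof is a direct unwinding of definitions, with the only ingredients being (i) the fact that some integral chart contains each point, (ii) the connectedness lemma used to pass from ``locally integral-integral affine'' to ``agrees with an element of $\Aff_\Z(\R^n)$'' on a small connected neighborhood, and (iii) the routine verification that elements of $\Aff_\Z(\R^n)$ preserve each refined lattice $m^{-1}\Z^n$.
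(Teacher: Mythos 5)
Your proposal is correct and follows essentially the same route as the paper: exhibit the transition map $\phi\circ\psi^{-1}$ as $y\mapsto Ay+b$ with $A\in\GL(n,\Z)$, $b\in\Z^n$ near $\psi(x)$, and observe that such maps preserve $m^{-1}\Z^n$ (the paper phrases this as $m\phi(x)=mb+A(m\psi(x))\in\Z^n$). The only cosmetic differences are that you also spell out the trivial reverse inclusion and route the local-form step through Proposition~\ref{locally_affine_connectedness}, where the paper just cites compatibility of charts directly.
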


\begin{proof}
Let $x \in M_{m^{-1}\Z}$.
So there exists an integral chart $\psi \colon U'\rightarrow \Omega' \subseteq \R^n$ 
whose domain contains $x$ and such that $\psi(x)\in m^{-1}\Z^n$.
Let $\phi \colon U \to \Omega \subseteq \R^n$
be another integral chart whose domain contains~$x$.
By compatibility,
there exists a neighbourhood of $x$
on which the composition
$\phi \circ \psi^{-1}$ has the form $y \mapsto b+Ay$ for $b\in \Z^n$ and $A\in \mathsf{GL}_n(\Z)$. Thus $m\phi(x)=m\phi(\psi^{-1}(\psi(x)))=m(b+A(\psi(x)))=mb+A(m\psi(x)) \in \Z^n$.
\end{proof}

\begin{definition} \label{lattice count M}
    Let $M$ be an integral-integral affine manifold. Its \textbf{lattice count function} is
    \begin{align*}
        &L_M \colon \N \rightarrow \N \cup \{0,\infty\} \\
        &L_M(m)\coloneq\#M_{m^{-1}\Z}\,.
    \end{align*}
\end{definition}

\begin{proposition}
    \label{affine_image_polytope}
    Let $P\subseteq \R^n$ be a rational polytope, and let $f\in \Aff_{\Z}(\R^n)$. Then $f(P)$ is a rational polytope. 
\end{proposition}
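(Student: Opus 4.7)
The plan is to invoke the definition of a rational polytope directly and exploit the fact that an affine map preserves convex combinations. Start by writing $P = \conv\{v_1, \ldots, v_k\}$ with $v_i \in \Q^n$, as guaranteed by Definition~\ref{polytope_def}.

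Next, because $f(x) = Ax + b$ is an affine map, it commutes with convex combinations: for any $\alpha_i \geq 0$ with $\sum \alpha_i = 1$, one has
\[
f\!\left(\sum_{i=1}^k \alpha_i v_i\right) = A\!\left(\sum_{i=1}^k \alpha_i v_i\right) + b = \sum_{i=1}^k \alpha_i(Av_i+b) = \sum_{i=1}^k \alpha_i f(v_i).
\]
Taking the image under $f$ of every point of $P$ therefore gives $f(P) = \conv\{f(v_1),\ldots,f(v_k)\}$.

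Finally, I would check that each $f(v_i)$ lies in $\Q^n$. This is immediate: the matrix $A \in \GL_n(\Z)$ has integer entries and $v_i \in \Q^n$, so $Av_i \in \Q^n$; adding $b \in \Z^n \subseteq \Q^n$ keeps us in $\Q^n$. Hence $f(P)$ is the convex hull of finitely many rational points, i.e.\ a rational polytope by Definition~\ref{polytope_def}.

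There is no real obstacle here; the only thing to be a little careful about is the commutation of $f$ with convex combinations, but this is built into the notion of affinity. Note that the hypothesis $A \in \GL_n(\Z)$ is stronger than needed — any $A$ with rational entries (and any $b \in \Q^n$) would suffice — but we only require the weaker statement.
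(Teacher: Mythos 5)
Your proof is correct and follows essentially the same route as the paper's: write $P$ as the convex hull of rational points, use affinity to push the convex hull through $f$, and check that the image vertices remain rational. The extra explicit verification that $f$ commutes with convex combinations, and the remark that $A\in\GL_n(\Z)$ is stronger than needed, are fine but do not change the argument.
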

\begin{proof}
    Let $P=\conv\{q_1,\dots,q_k\}$ for $q_i\in \Q^n$. Let $A \in \GL_n(\Z)$ and $b\in \Z^n$ be such that $f(x)=b+Ax$. Since $f$ is an affine map, $f(P)=\conv\{f(q_1),\dots,f(q_k)\}=\conv\{b+Aq_1,\dots,b+Aq_k\}$.
    ~Because $A \in \GL_n(\Z)$ and $b\in \Z^n$, we have $b+Aq_i \in \Q^n$ for all $1\le i\le k$. Thus, $f(P)$ is a rational polytope. 
\end{proof}

\begin{definition}
Let $P \subset M$ be a subset
of an $n$-dimensional integral-integral affine manifold $M$. 
Then $P$ is called a \textbf{rational polytope} if there exists an integral chart $\phi$ whose domain contains $P$ and such that 
$\phi(P)$ is a rational polytope in $\R^n$.
\end{definition}
\begin{proposition}
\label{polytope_affine_chart}
Let M be an integral-integral affine manifold, $P\subseteq M$ a rational polytope and $\psi \colon W \rightarrow \Omega \subseteq \R^n$ any integral chart such that $P\subseteq W$. Then $\psi(P) \subseteq \R^n$ is a rational polytope.   
\end{proposition}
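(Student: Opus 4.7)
The plan is to reduce to a situation where Proposition \ref{locally_affine_connectedness} and Proposition \ref{affine_image_polytope} can be applied in sequence. By the definition of a rational polytope in $M$, there exists some integral chart $\phi \colon U \to \R^n$ with $P \subseteq U$ and $\phi(P)$ a rational polytope in $\R^n$. I would like to compare $\phi$ and $\psi$ on the open set $\phi(U \cap W)$ via the transition map $\psi \circ \phi^{-1}$, which is locally integral-integral affine.

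First I would observe that $P$ is connected: $\phi(P)$ is convex (a rational polytope is a convex hull of finitely many points) hence connected, and $\phi$ is a homeomorphism on its domain, so $P = \phi^{-1}(\phi(P))$ is connected, and therefore so is $\phi(P) \subseteq \phi(U \cap W)$. Since $\phi(P)$ is a connected subset of the open set $\phi(U \cap W)$, it lies entirely inside a single connected component $C$ of $\phi(U \cap W)$.

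Next, Proposition \ref{locally_affine_connectedness} applied to the restriction of $\psi \circ \phi^{-1}$ to the connected open set $C$ provides an element $f \in \Aff_\Z(\R^n)$ such that $\psi \circ \phi^{-1} = f$ on $C$. In particular $\psi(P) = f(\phi(P))$. Finally, Proposition \ref{affine_image_polytope} tells us that $f(\phi(P))$ is a rational polytope in $\R^n$, completing the argument.

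The only potential subtlety is the need for $\phi(P)$ to land in a single connected component of $\phi(U \cap W)$; this is dispensed with immediately once one notes that $P$ (equivalently $\phi(P)$) is connected. No further obstacles are expected — the rest is assembling results already stated in the excerpt.
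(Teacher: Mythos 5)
Your proposal is correct and follows essentially the same route as the paper's own proof: both use the defining chart $\phi$, observe that $\phi(P)$ is connected (being convex) and hence lies in a single connected component of $\phi(U \cap W)$, apply Proposition~\ref{locally_affine_connectedness} on that component to obtain $f \in \Aff_\Z(\R^n)$ with $\psi \circ \phi^{-1} = f$ there, and conclude via Proposition~\ref{affine_image_polytope}. No gaps.
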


\begin{proof}
By definition, there is an integral chart $\phi \colon U \rightarrow \Omega'\subseteq  \R^n$ such that $P \subseteq U$ and $\phi(P)$ is a rational polytope.
In particular, $\phi(P)$ is connected.
Let $\calO$ be the connected component of $\phi(U \cap W)$
that contains $\phi(P)$.
By Proposition \ref{locally_affine_connectedness}, 
there exists $f \in \Aff_\Z(\R^n)$
that coincides on $\calO$ with $\psi \circ \phi^{-1}$. 
By Proposition \ref{affine_image_polytope}, 
$\psi(P) = f(\phi(P))$ is a rational polytope.
\end{proof}

\begin{proposition}
    \label{rational_intersection}
    Let $P_1, P_2 \subseteq \R^n$ be rational polytopes. Then $P_1\cap P_2$ is either empty or a rational polytope.
\end{proposition}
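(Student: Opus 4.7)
The plan is to reduce the statement to the standard duality between the vertex ($V$) description and the half-space ($H$) description of rational polytopes, and then read off rationality of the vertices from rationality of the defining linear inequalities.

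First I would recall the Minkowski--Weyl theorem in its rational form: a subset of $\R^n$ is a rational polytope in the sense of Definition~\ref{polytope_def} if and only if it is a bounded intersection of finitely many closed half-spaces of the form $\{x\in\R^n\mid \langle a,x\rangle\le b\}$ with $a\in\Q^n$ and $b\in\Q$. This equivalence is classical and can be assumed; it is the only nontrivial ingredient.

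Given this, the argument is short. Write $P_1$ and $P_2$ as bounded intersections of finitely many rational half-spaces, say $P_1=\bigcap_{i\in I_1}H_i$ and $P_2=\bigcap_{j\in I_2}H_j$ with each $H_\ell$ rational. Then $P_1\cap P_2=\bigcap_{\ell\in I_1\cup I_2}H_\ell$ is again a bounded intersection of finitely many rational half-spaces. If $P_1\cap P_2$ is nonempty, it is therefore a bounded rational $H$-polytope. Invoking the reverse direction of Minkowski--Weyl concludes that $P_1\cap P_2$ is the convex hull of its (finitely many) vertices, each of which arises as the unique solution of a nondegenerate linear system built from $n$ of the rational defining equations $\langle a_\ell,x\rangle=b_\ell$; by Cramer's rule such a solution lies in $\Q^n$. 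Hence $P_1\cap P_2=\conv\{v_1,\dots,v_s\}$ with $v_k\in\Q^n$, as required.

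The main obstacle is really just citing Minkowski--Weyl in the rational setting; once that is in hand, the rest of the proof is formal. If one prefers an entirely self-contained argument, one can instead proceed by induction on $n$ by slicing with the rational supporting hyperplanes of the facets of $P_1$ and $P_2$, but this is longer and does not add conceptual content.
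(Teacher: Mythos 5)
Your argument is correct and is essentially the same as the paper's: the paper simply cites Bruns--Gubeladze (their Theorem 1.26 is exactly the rational Minkowski--Weyl equivalence you invoke, and their Proposition 1.69 is the intersection statement), whereas you have written out the standard argument behind that citation. The one point worth keeping explicit, as you do, is the nonemptiness hypothesis before passing back from the $H$-description to the $V$-description.
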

\begin{proof}
The definition of a polytope in \cite{gubeladze_polytopes_2009}
is equivalent to our \Cref{polytope_def},
by \cite[Theorem 1.26]{gubeladze_polytopes_2009}.
The result is then in \cite[Proposition 1.69]{gubeladze_polytopes_2009}.
\end{proof}

\section{Main Theorems}

\begin{proposition}{}
    Let $M$ be a closed integral-integral affine manifold. Then its lattice count function (Definition \ref{lattice_count_function}) is finite: $L_M(m) < \infty$ for every $m\in \N$.
\end{proposition}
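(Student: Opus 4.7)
The plan is to use compactness of $M$ to reduce the problem to a finite count of lattice points in bounded subsets of $\R^n$, where finiteness is obvious.

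First I would choose a finite cover of $M$ by relatively compact open sets sitting inside integral charts. Concretely, since every point of $M$ lies in the domain of some integral chart, and $M$ is compact and locally Euclidean (hence locally compact), I can pick open sets $V_1,\dots,V_N$ whose union is $M$, and integral charts $\psi_i \colon U_i \to \Omega_i \subseteq \R^n$, such that $\overline{V_i} \subseteq U_i$ and $\overline{V_i}$ is compact for each $i$.

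Next, for each $i$, the set $\psi_i(\overline{V_i})$ is a compact, hence bounded, subset of $\R^n$. Therefore $\psi_i(\overline{V_i}) \cap m^{-1}\Z^n$ is a finite set. I would then invoke \Cref{if one then all}: if $x \in V_i \cap M_{m^{-1}\Z}$, then in the particular chart $\psi_i$ we must have $\psi_i(x) \in m^{-1}\Z^n$, so $\psi_i(V_i \cap M_{m^{-1}\Z}) \subseteq \psi_i(\overline{V_i}) \cap m^{-1}\Z^n$. Since $\psi_i$ is injective, this shows $V_i \cap M_{m^{-1}\Z}$ is finite.

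Finally, $M_{m^{-1}\Z} = \bigcup_{i=1}^N (V_i \cap M_{m^{-1}\Z})$ is a finite union of finite sets, hence finite, giving $L_M(m) = \# M_{m^{-1}\Z} < \infty$. I do not anticipate any genuine obstacle here; the only point that requires a moment's care is invoking \Cref{if one then all} to make sure that the ``there exists a chart'' quantifier in the definition of $M_{m^{-1}\Z}$ can be replaced, for each $x \in V_i$, by the specific chart $\psi_i$, so that the finite boundedness argument applies uniformly.
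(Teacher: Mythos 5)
Your proof is correct, but it takes a different route from the paper's. The paper argues by contradiction using sequential compactness: assuming $M_{m^{-1}\Z}$ is infinite, it extracts a convergent subsequence of distinct points, pushes it into a single integral chart around the limit, and derives a contradiction from the fact that $m^{-1}\Z^n$ has no convergent sequence of distinct elements. You instead give a direct covering argument: a finite cover by relatively compact sets inside chart domains reduces the count to finitely many bounded regions of $\R^n$, each meeting $m^{-1}\Z^n$ in a finite set. Both arguments rest on the same two ingredients --- compactness of $M$ and discreteness of $m^{-1}\Z^n$ --- and both need the content of Lemma~\ref{if one then all} to replace the existential quantifier in the definition of $M_{m^{-1}\Z}$ by the specific chart at hand (you cite it explicitly; the paper uses it implicitly when it asserts that $\phi(x_{i_j}) \in m^{-1}\Z^n$ for the chart $\phi$ around the limit point). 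Your version is slightly longer to set up (one must construct the relatively compact refinement of the chart cover) but is constructive rather than by contradiction, and the finite cover you build is of exactly the kind the paper reuses later in Propositions~\ref{rational_polytope_cover} and~\ref{lattice_asymptotics}, so nothing is wasted.
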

\begin{proof}
    Assume not; let $(x_i)_{i=0}^\infty$ be a sequence of distinct points in $M_{m^{-1}\Z}$. Since M is compact, this sequence has a converging subsequence $(x_{i_j})_{j=0}^\infty \rightarrow x^*$.
    Let $\phi: U \rightarrow \Omega\subseteq  \R^n$ be an integral chart such that $x^* \in U$. Then for some $N>0$ we have $x_{i_j} \in U$ for all $j\ge N$. Then $\phi(x_{i_j})$ is a convergent sequence of distinct elements of ${m^{-1}\Z^n}$, which is a contradiction.
\end{proof}

\begin{proposition}[Rational Ehrhart]
    \label{rational_ehrhart}
    Let $P\subseteq\R^n$ be a rational polytope. Then its lattice count function is quasi-polynomial.
\end{proposition}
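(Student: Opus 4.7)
The plan is a generating-function argument based on the cone over $P$. Form the pointed rational polyhedral cone
\[C(P) := \overline{\{(tx, t) \in \R^{n+1} : x \in P,\ t > 0\}} \cup \{0\} \subseteq \R^{n+1},\]
placing $P$ at height one, so that $(y, h) \in C(P)$ iff $h \geq 0$ and $y \in hP$. For $m \in \N$, the map $y \mapsto (my, m)$ gives a bijection between $m^{-1}\Z^n \cap P$ and the height-$m$ lattice points of $C(P)$, hence the Ehrhart generating function satisfies
\[E(t) := \sum_{m \geq 0} L_P(m)\, t^m \ =\ \sum_{w \in C(P) \cap \Z^{n+1}} t^{w_{n+1}}.\]

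Next, subdivide $C(P)$ into finitely many rational simplicial subcones and refine the subdivision to a disjoint \emph{half-open} decomposition $C(P) = \bigsqcup_j \tilde\sigma_j$ (this is standard; alternatively, use closed simplicial cones together with inclusion-exclusion on repeated intersections, which remain rational cones by the analogue of \Cref{rational_intersection}). For a half-open simplicial rational cone $\tilde\sigma$ with primitive integer ray generators $u_1, \dots, u_k \in \Z^{n+1}$ and half-open fundamental parallelepiped $\tilde\Pi$, every lattice point of $\tilde\sigma$ is uniquely expressible as $v + \sum_{i=1}^k a_i u_i$ with $v \in \tilde\Pi \cap \Z^{n+1}$ and $a_i \in \Z_{\geq 0}$; since $\tilde\Pi$ is bounded, $\tilde\Pi \cap \Z^{n+1}$ is finite, and
\[\sum_{w \in \tilde\sigma \cap \Z^{n+1}} t^{w_{n+1}} \ =\ \frac{\sum_{v \in \tilde\Pi \cap \Z^{n+1}} t^{v_{n+1}}}{\prod_{i=1}^k \bigl(1 - t^{(u_i)_{n+1}}\bigr)}\]
is a rational function in $t$ with polynomial numerator and denominator a product of factors $1-t^h$ with $h \in \N$. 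Summing over $j$ exhibits $E(t)$ as a rational function of the same shape.

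Finally, let $D$ be a common multiple of the heights appearing in the denominator. Rewriting the denominator as a power of $(1-t^D)$ and expanding in partial fractions via the identity $(1-t^D)^{-(k+1)} = \sum_{\ell \geq 0} \binom{\ell + k}{k} t^{\ell D}$, combined with monomial shifts $t^r$ ($0 \leq r < D$) coming from the numerator, yields an explicit quasi-polynomial formula for $L_P(m)$ with period dividing $D$. The main obstacle I anticipate is setting up the half-open triangulation (or, equivalently, bookkeeping the inclusion-exclusion on a closed triangulation); all the remaining steps are formal once the generating-function identity above is in hand.
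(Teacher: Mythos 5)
The paper does not actually prove this proposition; it simply cites \cite[Proposition 3.23]{sinai_robins}, and your argument is essentially the proof given in that reference: cone over $P$ at height one, triangulate into simplicial rational cones, tile each by translates of its fundamental parallelepiped, and read the quasi-polynomial off the resulting rational generating function. Your proof is correct; the one detail worth spelling out, beyond the half-open decomposition you already flag, is the degree bound $\deg(\text{numerator}) \le \deg(\text{denominator})$ for each term (which holds because every lattice point $v$ of the half-open parallelepiped has height $v_{n+1}=\sum_i \lambda_i (u_i)_{n+1} \le \sum_i (u_i)_{n+1}$), since this is what upgrades ``the coefficients are eventually quasi-polynomial'' to a quasi-polynomial formula valid for all $m \ge 1$, which is what the statement requires.
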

\begin{proof}
    \cite[Proposition 3.23]{sinai_robins}.
\end{proof}

\begin{lemma}
    \label{intersection_lemma}
    Let $M$ be an integral-integral affine manifold and $P_1,P_2 \subseteq M$ rational polytopes, and let $\phi_1:U_1\rightarrow \Omega_1\subseteq \R^n$ and $\phi_2:U_2 \rightarrow \Omega_2 \subseteq \R^n$ be integral charts such that $P_1\subseteq U_1$ and $P_2\subseteq U_2$. If $U_1\cap U_2$ is connected, then $P_1 \cap P_2$ is a rational polytope or is empty.
\end{lemma}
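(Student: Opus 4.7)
The plan is to push both polytopes into a single copy of $\R^n$ via $\phi_1$, apply the Euclidean statement \Cref{rational_intersection} to obtain a candidate rational polytope $T$, and then conclude $\phi_1(P_1 \cap P_2) = T$ by a clopen-connectedness argument. Assume $P_1 \cap P_2 \neq \emptyset$; otherwise there is nothing to prove. Since $\phi_2(U_1 \cap U_2)$ is connected, \Cref{locally_affine_connectedness} yields a single $f \in \Aff_\Z(\R^n)$ agreeing on $\phi_2(U_1 \cap U_2)$ with the transition $\phi_1 \circ \phi_2^{-1}$. Set $R := \phi_1(P_1)$ and $Q := \phi_2(P_2)$. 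By \Cref{polytope_affine_chart} both are rational polytopes in $\R^n$; by \Cref{affine_image_polytope} so is $f(Q)$; and by \Cref{rational_intersection}, $T := R \cap f(Q)$ is either empty or a (convex, hence connected) rational polytope in $\R^n$.

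Next I would establish the identity
\[
\phi_1(P_1 \cap P_2) = T \cap \phi_1(U_1 \cap U_2).
\]
For ``$\subseteq$'', any $x \in P_1 \cap P_2 \subseteq U_1 \cap U_2$ gives $\phi_1(x) \in R$ and $\phi_1(x) = f(\phi_2(x)) \in f(Q)$. For ``$\supseteq$'', given $y = \phi_1(x)$ with $x \in U_1 \cap U_2$ and $y \in T$, the condition $y \in R$ combined with injectivity of $\phi_1$ on $U_1$ forces $x \in P_1$; then writing $y = f(\phi_2(x_2))$ with $x_2 \in P_2$ and comparing with $y = f(\phi_2(x))$ (valid since $x \in U_1 \cap U_2$), injectivity of $f$ and of $\phi_2|_{U_2}$ yield $x = x_2 \in P_2$.

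The final step is a clopen argument on $T$. The set $\phi_1(P_1 \cap P_2) = T \cap \phi_1(U_1 \cap U_2)$ is open in $T$ as $T$ intersected with an open subset of $\R^n$; it is closed in $T$ because $P_1 \cap P_2$ is compact (a closed subset of the compact polytope $P_1$), so $\phi_1(P_1 \cap P_2)$ is compact and hence closed in $\R^n$. Since $T$ is connected and this clopen subset is nonempty, it must equal all of $T$. Thus $\phi_1(P_1 \cap P_2) = T$ is a rational polytope in $\R^n$, exhibiting $P_1 \cap P_2$ as a rational polytope in $M$ via the chart $\phi_1$.

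The main obstacle I anticipate is the ``$\supseteq$'' inclusion in the displayed identity: a priori a point $y \in T$ could arise from a ``phantom'' pair $(x_1, x_2) \in (P_1 \setminus U_2) \times (P_2 \setminus U_1)$ with $\phi_1(x_1) = f(\phi_2(x_2))$. Intersecting with $\phi_1(U_1 \cap U_2)$ formally excludes such phantoms but produces a non-polytopal set; it is exactly the connectedness of $T$ (from convexity) together with the compactness of $P_1 \cap P_2$ that closes the gap via the clopen argument.
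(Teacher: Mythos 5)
Your proposal is correct and follows essentially the same route as the paper: both reduce to the identity $\phi_1(P_1\cap P_2)=\bigl(\phi_1(P_1)\cap f(\phi_2(P_2))\bigr)\cap f(\phi_2(U_1\cap U_2))$ (your $T\cap\phi_1(U_1\cap U_2)$, since $\phi_1(U_1\cap U_2)=f(\phi_2(U_1\cap U_2))$), invoke Propositions~\ref{locally_affine_connectedness}, \ref{affine_image_polytope} and~\ref{rational_intersection} to see that $T$ is a rational polytope, and finish with the same clopen-in-a-connected-polytope argument using compactness of $P_1\cap P_2$. Your write-up is, if anything, slightly more explicit about the injectivity needed for the ``$\supseteq$'' inclusion, which the paper's chain of set equalities uses implicitly.
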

\begin{proof}
    By Proposition~{\ref{affine_image_polytope}} the sets $\phi_1(P_1), \phi_2(P_2)$ are rational polytopes. The intersection $U_1\cap U_2$ is connected so by Proposition~\ref{locally_affine_connectedness} the composition $\phi_1\circ\phi_2^{-1}$ has the form $x \mapsto Ax +b$ where $b\in \Z^n$ and $A\in \mathsf{GL}_n(\Z)$. Then
    \begin{align*}
        &\phi_1(P_1 \cap P_2) = \phi_1(P_1 \cap P_2 \cap U_1 \cap U_2) = \phi_1(P_1)\cap\phi_1(P_2\cap U_1 \cap U_2).
    \end{align*}
    Moreover,
    \begin{align*}
        & \phi_1(P_2\cap U_1 \cap U_2)=(\phi_1\circ\phi_2^{-1})(\phi_2(P_2\cap U_1 \cap U_2))=b+A(\phi_2(P_2)\cap \phi_2(U_1\cap U_2)) \\
        &=b+A(\phi_2(P_2))\cap A\phi_2(U_1\cap U_2) =(b+A\phi_2(P_2))\cap(b+A\phi_2(U_1\cap U_2)).
    \end{align*}
    Therefore,
    \begin{align*}
         \phi_1(P_1 \cap P_2)=(\phi_1(P_1)\cap (b+A\phi_2(P_2)))\cap (b+A\phi_2(U_1\cap U_2))\,.
    \end{align*}
    
    Both $P_1,P_2$ are compact subsets of $M$, hence $\phi_1(P_1\cap P_2)$ is compact and hence closed. The set $b+A\phi_2(U_1\cap U_2)$ is open and $Q\coloneq\phi_1(P_1)\cap (b+A\phi_2(P_2))$ is, by Propositions~\ref{rational_intersection} and~\ref{affine_image_polytope}, a rational polytope. So $\phi_1(P_1\cap P_2)$ is both an open and a closed subset of $Q$. Because $Q$ is (convex, hence) connected, $\phi_1(P_1\cap P_2)\in \{\emptyset, Q\}$. 
\end{proof}

\begin{corollary}{}
    \label{finite_rational_intersection}
    Let M be an integral-integral affine manifold, let $\{P_i\}_{i\in I}$ be a finite collection of rational polytopes in $M$,
    and let $\{\phi_i \colon U_i\rightarrow \Omega_i \subseteq \R^n\}_{i\in I}$ be integral charts such that $P_i\subseteq U_i$ for every $i \in I$. Suppose that, for every subset $I' \subseteq I$, the intersection $\bigcap_{i \in I'} U_i$ is (empty or) connected. Then $\bigcap_{i \in I}{P_i}$, (if nonempty,) is a rational polytope.

\end{corollary}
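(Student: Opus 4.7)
My plan is to prove the corollary by induction on $k = |I|$, with \Cref{intersection_lemma} serving as the inductive engine. The base case $k = 1$ is tautological, and the case $k = 2$ is exactly that lemma. For the inductive step, I assume the statement for collections of size $k-1 \geq 2$ and take $\{P_i\}_{i=1}^{k}$ together with charts $\{\phi_i\}_{i=1}^{k}$ satisfying the hypotheses.

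The subcollection $\{P_i\}_{i=1}^{k-1}$ with the charts $\{\phi_i\}_{i=1}^{k-1}$ again satisfies the connectedness hypothesis, since any $I' \subseteq \{1,\dots,k-1\}$ is also a subset of $I$. By the inductive hypothesis, $Q \coloneq \bigcap_{i=1}^{k-1} P_i$ is either empty or a rational polytope in $M$. If it is empty, the full intersection is empty and there is nothing to prove. Otherwise $Q \subseteq P_1 \subseteq U_1$, so $\phi_1$ is an integral chart whose domain contains $Q$; and $U_1 \cap U_k$ is (empty or) connected by the hypothesis applied to $I' = \{1,k\}$. Applying \Cref{intersection_lemma} to the rational polytopes $Q$ and $P_k$ with the charts $\phi_1$ and $\phi_k$ then yields that $Q \cap P_k = \bigcap_{i=1}^{k} P_i$ is either empty or a rational polytope, closing the induction.

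I do not foresee a serious obstacle here; the argument is essentially a bookkeeping exercise built on the two-polytope lemma. The only point requiring care is to exhibit, for the intermediate polytope $Q$, a chart whose domain both contains $Q$ and has connected intersection with $U_k$. The choice $\phi_1$ handles both requirements at once, since $Q \subseteq P_1 \subseteq U_1$ and the connectedness of $U_1 \cap U_k$ is exactly the $|I'|=2$ instance of the hypothesis.
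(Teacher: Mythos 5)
Your proof is correct and takes essentially the same route as the paper, whose entire argument is that the result ``follows by repeated application of Lemma~\ref{intersection_lemma}''; your induction is exactly that repeated application spelled out. The only (harmless) difference is in the bookkeeping: the paper's parenthetical remark invokes connectedness of the nested intersections $U_1 \cap \cdots \cap U_{l'}$, whereas your choice of the fixed chart $\phi_1$ to house the partial intersection $Q$ shows that connectedness of the pairwise intersections $U_1 \cap U_j$ already suffices.
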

\begin{proof}
    This follows by repeated application of Lemma~\ref{intersection_lemma}.
(When $I = \{1,\ldots,l\}$, it is enough
to assume that $U_1 \cap \ldots \cap U_{l'}$ is connected
for all $l' \in \{1,\ldots,l\}$.)
\end{proof}

\begin{corollary}{}
    \label{rational_polytope_cover}
    Let $M$ be a closed $n$ dimensional integral-integral affine manifold. Then there exists a finite collection $\{P_i\}_{i=1}^k$ of rational polytopes such that: 
    \begin{enumerate}
        \item $M=\bigcup_{i=1}^k{P_i}$
        \item For every $I\subseteq \{1,\dots,k\}$, the intersection $\cap_{i\in I}{P_i}$ is a rational polytope or empty.
    \end{enumerate}
\end{corollary}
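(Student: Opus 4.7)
The plan is to build the cover in three stages: first construct a finite good cover $\{U_i\}_{i=1}^N$ of $M$ whose members are integral charts, then shrink it to a compact subdivision $\{K_i\}$ with $K_i\subseteq U_i$ and $\bigcup_i K_i = M$, and finally, inside each chart, replace $K_i$ by a finite union of small closed rational cubes and take their preimages as the polytopes $P_j$.

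For the first stage I would equip $M$ with an auxiliary Riemannian metric and pick small geodesically convex open balls each contained in the domain of some integral chart. Restricting the ambient chart to such a ball gives an integral chart with that ball as its domain, and finite intersections of geodesically convex sets are geodesically convex, hence diffeomorphic to $\R^n$. Compactness of $M$ then extracts a finite subcover $\{U_i\}_{i=1}^N$, which is a good cover by integral charts. The classical shrinking lemma for compact Hausdorff spaces produces open $V_i$ with $\overline{V_i}\subseteq U_i$ and $\bigcup_i V_i = M$; set $K_i := \overline{V_i}$, a compact subset of $U_i$. Next, for each $i$ the image $\phi_i(K_i)$ is a compact subset of the open set $\phi_i(U_i) \subseteq \R^n$, so one can cover it by finitely many closed axis-aligned rational cubes $C_{i,1},\ldots,C_{i,m_i}$ of side-length smaller than the distance from $\phi_i(K_i)$ to the complement of $\phi_i(U_i)$; each such cube is a rational polytope contained in $\phi_i(U_i)$. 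The preimages $P_{i,j} := \phi_i^{-1}(C_{i,j}) \subseteq U_i$ are then rational polytopes in $M$ by definition, and
\[
\bigcup_{i,j} P_{i,j} \supseteq \bigcup_i K_i = M.
\]
For any subfamily $\{P_{i_k,j_k}\}_{k=1}^l$, collapsing repeated indices leaves a collection of charts whose intersection is diffeomorphic to $\R^n$ (hence connected) by the good-cover property, so \Cref{finite_rational_intersection} applies and gives that $\bigcap_k P_{i_k,j_k}$ is either empty or a rational polytope. This establishes both conclusions of the corollary.

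The main obstacle I expect is the first stage: assembling a good cover whose members are simultaneously integral charts. The two ingredients --- existence of good covers (e.g.\ from geodesic convexity) and the fact that integral charts form a basis for the topology of $M$ --- are each standard, but they must be combined carefully so that restricting geodesically convex balls to lie inside integral chart domains preserves contractibility of all finite intersections. Once that is in place, the shrinking lemma and \Cref{finite_rational_intersection} do the rest.
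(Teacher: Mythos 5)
Your proposal is correct and follows essentially the same route as the paper: a good cover by connected sets contained in integral chart domains, closed rational cubes inside the charts pulled back to $M$, and Corollary~\ref{finite_rational_intersection} applied via the connectedness of intersections guaranteed by the good-cover property. The only difference is bookkeeping --- the paper places one rational cube around each point and extracts a finite subcover of the cube interiors by compactness, whereas you first shrink the cover and then tile each compact piece by finitely many cubes --- and both work.
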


\begin{proof}
Because every open cover of $M$ has a refinement
that is a good open cover of $M$
\cite[Propositions 5.1 and 5.2]{bott_differential_1982},
there exists a good open cover $\calW$ of $M$
such that every element of $\calW$ is contained in the domain
of an integral chart.

For each point $x \in M$, let $W_x$ be an element of $\calW$ that contains $x$, 
let $\varphi \colon U \to \Omega$ be an integral chart 
whose domain contains $W_x$, 
and let $\tQ_x \subseteq \R^n$ be a closed cube with rational vertices
that is contained in $\varphi(W_x)$
and whose interior contains $\varphi(x)$.
Then $Q_x\coloneq \varphi^{-1}(\tilde Q_x)$ is a rational polytope in $M$.

Because $M$ is compact, there exist $x_1,\ldots, x_k \in M$
such that 
$M=\bigcup_{i=1}^k{\text{Int}(Q_{x_i})}$.

For each $i \in \{ 1, \ldots, k \}$, let $P_i\coloneq Q_{x_i}$. 
Then $\{P_i\}_{i=1}^k$ covers $M$.
Because $\calW$ is a good cover,
for each $I' \subseteq \{1,\ldots,k\}$,
if the intersection 
$\bigcap_{i\in I'} W_{x_i}$ is nonempty, then it is connected.
By Corollary~\ref{finite_rational_intersection}, each intersection $\bigcap_{i\in I}{P_i}$ is either empty or again a rational polytope. 
\end{proof}

\begin{proposition}{(Ehrhart Theorem for closed integral-integral affine manifold)}
    \label{manifold_ehrhart}
    Let $M$ be a closed $n$ dimensional integral-integral affine manifold. Then its lattice count function $L_M \colon \N \to \N \cup \{0\}$ is quasi-polynomial.
\end{proposition}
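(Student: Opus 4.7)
The plan is to use inclusion--exclusion on the polytope cover supplied by Corollary~\ref{rational_polytope_cover}, together with the Euclidean rational Ehrhart theorem (Proposition~\ref{rational_ehrhart}), to assemble $L_M$ as a finite signed sum of ordinary lattice count functions of rational polytopes in $\R^n$.

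First, I would invoke Corollary~\ref{rational_polytope_cover} to fix a finite collection $\{P_i\}_{i=1}^{k}$ of rational polytopes in $M$ covering $M$, such that for every nonempty $I \subseteq \{1,\ldots,k\}$ the intersection $Q_I \coloneq \bigcap_{i \in I} P_i$ is either empty or a rational polytope in $M$. Since $M_{m^{-1}\Z} = \bigcup_{i=1}^{k} (M_{m^{-1}\Z} \cap P_i)$, the ordinary inclusion--exclusion formula gives
\[
L_M(m) \;=\; \sum_{\emptyset \neq I \subseteq \{1,\ldots,k\}} (-1)^{|I|+1}\, \#\!\left(M_{m^{-1}\Z} \cap Q_I\right).
\]

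Next, for each $I$ with $Q_I$ nonempty I would use the definition of a rational polytope in $M$ to pick an integral chart $\phi_I \colon U_I \to \Omega_I \subseteq \R^n$ with $Q_I \subseteq U_I$ and $\phi_I(Q_I)$ a rational polytope in $\R^n$. Lemma~\ref{if one then all} identifies $M_{m^{-1}\Z} \cap U_I$ with $\phi_I^{-1}(m^{-1}\Z^n \cap \Omega_I)$, so $\phi_I$ restricts to a bijection between $M_{m^{-1}\Z} \cap Q_I$ and $(m^{-1}\Z^n) \cap \phi_I(Q_I)$. Therefore $\#(M_{m^{-1}\Z} \cap Q_I) = L_{\phi_I(Q_I)}(m)$, which is quasi-polynomial in $m$ by Proposition~\ref{rational_ehrhart}.

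Finally, I would note that a finite $\Z$-linear combination of quasi-polynomials is again a quasi-polynomial: if the summands have integer periods $N_1,\ldots,N_s$, the combination has period dividing $\operatorname{lcm}(N_1,\ldots,N_s)$, and one may collect coefficients by the power of $m$. Applying this to the inclusion--exclusion sum above yields the desired conclusion. The only point requiring care is the chart-independence needed to pass from $M_{m^{-1}\Z} \cap Q_I$ to $\phi_I(Q_I) \cap m^{-1}\Z^n$, and this is exactly the content of Lemma~\ref{if one then all}; once that is in hand, the argument is essentially formal, so I do not expect a serious obstacle.
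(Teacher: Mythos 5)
Your proposal is correct and follows essentially the same route as the paper: inclusion--exclusion over the polytope cover of Corollary~\ref{rational_polytope_cover}, identification of $\#(M_{m^{-1}\Z}\cap P_I)$ with a Euclidean lattice count via Lemma~\ref{if one then all}, and then Proposition~\ref{rational_ehrhart}. The only cosmetic difference is that the paper takes an arbitrary integral chart containing $P_I$ and cites Proposition~\ref{polytope_affine_chart} to see that its image is a rational polytope, whereas you use the chart supplied by the definition; both work.
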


\begin{proof}
       Let $\{P_i\}_{i=1}^{k}$ be a rational polytope cover from Corollary~\ref{rational_polytope_cover}.
For each set of indices $I\subseteq \{1,\dots,k\}$,
denote $P_I\coloneq\bigcap_{i\in I}P_i$.
    By inclusion-exclusion,
    \begin{align*}
        L_M(m)=\sum_{x\in M_{m^{-1}\Z}}{1}=\sum_{j=1}^k{(-1)^{j+1}\sum\limits_{\substack{
        I\subseteq\{1,\dots,k\}
         \\ |I|=j}
         }{\# M_{m^{-1}\Z}\cap P_I}}\,.
    \end{align*}
    For every $I\subseteq \{1,\ldots,k\}$, take an integral chart $\psi:U\rightarrow \Omega\subseteq \R^n$ such that $P_I\subseteq U$. By Proposition~\ref{polytope_affine_chart}, $\psi(P_I)$ is a rational polytope. By Lemma~\ref{if one then all},
    \begin{align*}
        \#M_{m^{-1}\Z}\cap P_I=\#m^{-1}\Z^n\cap\psi(P_I).
    \end{align*}
By this and Proposition~\ref{rational_ehrhart}, 
$L_M(m)$ is an alternating sum of quasi-polynomials, hence a quasi-polynomial.
\end{proof}

\begin{proposition}{}
    \label{riemman_sum}
    Let $f: \R^n \rightarrow \R$ be a compactly supported smooth function. Then
    \begin{align*}
        \sum_{x\in m^{-1}\Z^n}{f(x)} = m^n\int_{\R^n}{f(x)dx} + o(1)
    \end{align*}
    as $m \to \infty$.
\end{proposition}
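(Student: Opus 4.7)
The plan is to apply the Poisson summation formula to the rescaled function $g(x)\coloneq f(x/m)$. Since $f$ is smooth and compactly supported, so is $g$, and in particular $g$ is a Schwartz function. Using the convention $\hat h(\xi)\coloneq \int_{\R^n} h(x)\,e^{-2\pi i\,\xi\cdot x}\,dx$, Poisson summation gives
\[
\sum_{k\in\Z^n} g(k)=\sum_{k\in\Z^n}\hat g(k).
\]
The left-hand side is exactly $\sum_{x\in m^{-1}\Z^n} f(x)$ (a finite sum, since $\supp f$ is compact), and a straightforward change of variables shows $\hat g(\xi)=m^n\hat f(m\xi)$. The right-hand side therefore decomposes as
\[
m^n\hat f(0)+m^n\!\sum_{k\in\Z^n\setminus\{0\}}\!\hat f(mk)=m^n\!\int_{\R^n} f(x)\,dx+E_m,
\]
where $E_m\coloneq m^n\sum_{k\neq 0}\hat f(mk)$, and it remains to show $E_m=o(1)$.

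For this I would exploit the rapid decay of $\hat f$: because $f$ is smooth and compactly supported, $\hat f$ is Schwartz, so for every $N\in\N$ there is a constant $C_N>0$ with $|\hat f(\xi)|\le C_N(1+|\xi|)^{-N}$. Fixing any $N>n$ and using $|mk|\ge 1$ for $m\ge 1$ and $k\ne 0$,
\[
|E_m|\le C_N\,m^n\!\sum_{k\in\Z^n\setminus\{0\}}(m|k|)^{-N}=C_N\,m^{n-N}\!\sum_{k\in\Z^n\setminus\{0\}}|k|^{-N}.
\]
The right-hand sum converges since $N>n$, and $m^{n-N}\to 0$, so $E_m\to 0$. (In fact the same argument with arbitrary $N$ shows $E_m$ decays faster than any negative power of $m$, which is much stronger than the $o(1)$ needed.)

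The only substantive ingredient is the Poisson summation formula for the Schwartz function $g$; this is classical, and can be proved by expanding the periodization $\tilde g(x)\coloneq \sum_{k\in\Z^n} g(x+k)$ as a Fourier series on the torus (its Fourier coefficients are precisely the values $\hat g(k)$) and evaluating at $x=0$. This matches the ``elementary Fourier analysis'' advertised in the abstract; given it, the remainder of the argument is just bookkeeping with Schwartz bounds on $\hat f$, so I do not expect any serious obstacle beyond setting up the scaling $g(x)=f(x/m)$ and the Fourier transform convention consistently.
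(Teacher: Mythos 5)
Your proposal is correct and follows essentially the same route as the paper: Poisson summation for the (rescaled) Schwartz function, isolation of the zero frequency to produce $m^n\int f$, and rapid decay of $\hat f$ to kill the nonzero frequencies. The only cosmetic difference is that you bound the tail by summing $|k|^{-N}$ over $\Z^n\setminus\{0\}$ directly, whereas the paper groups the lattice points into annuli $r\le |y|<r+1$ before estimating; both yield the (stronger) $o(m^{-\infty})$ decay.
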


\begin{proof}
Because $f$ is a compactly supported smooth function, it is a Schwartz function. 
Because Fourier transform maps Schwartz functions to Schwartz functions \cite[p.181, Corollary 2.2]{stein_fourier_2003}, 
the Fourier transform 
$$ \hat{f} \colon \R^n \to \C, $$
defined by 
$$\hat{f}(\zeta)=\int_{\R^n}{f(x)e^{-2\pi i\langle x,\zeta \rangle}dx},$$
is also a Schwartz function.
In particular, for every polynomial $P \in \R[y_1,\dots,y_n]$, 
\begin{align*}
        \sup_{y\in \R^n}|\hat{f}(y)P(y)|< \infty\,.
\end{align*}
In particular, the series 
$\sum_{y\in m\Z^n}{\hat{f}(y)}$ is absolutely convergent.

By the Poisson summation formula \cite[p.88, Theorem~3.44]
{robins_fourier},
\begin{equation} \label{poisson}
    \sum_{x\in m^{-1}\Z^n}{f(x)}=m^n\sum_{y\in m\Z^n}{\hat{f}(y)}=m^n\left(\int_{\R^n}f(x)dx+\sum_{0\neq y\in m \Z^n}\hat{f}(y)\right)\,.
\end{equation}
Thus we would like to prove that
    \begin{align}
        \label{remainder_exp}
        \sum_{0\neq y\in m \Z^n}\hat{f}(y) = o(m^{-n})
    \end{align}
 as $m \to \infty$.
(Although $\hat{f}$ is complex-valued,
by \eqref{poisson}
the sum in \eqref{remainder_exp} is real.)

For any positive integers $r \geq m$,
    \[
        A_r^m\coloneq\{y\in m\Z^n \ \mid \ r \le |y| < r+1\}\,.
    \]
Then
$$
\# A_r^m \ \le \ \#\{y\in \Z^n \ \mid \ 
        \text{ $|y_j| \leq r$ for all $j$ } 
       \} 
        \ \le \ (2r+1)^n \ \le \ (3r)^n,
$$     
and so, for any integer $N$ such that $2N-2 \geq n$,
\begin{equation} \label{annulus}
\sum_{y \in A_r^m}    \frac{1}{|y|^{2N}}
 \leq (3r)^n \frac{1}{r^{2N}} 
 = 3^n \cdot \frac{r^n}{r^{2N-2}} \cdot \frac{1}{r^2} 
 \leq 3^n \cdot m^{n+2-2N} \cdot \frac{1}{r^2}
 \, . 
\end{equation}

Because
$$
\bigcup_{r=m}^\infty{A_r^m}=m\Z^n \setminus \{0\} \,,
$$
we have
    \begin{align*}
        &\left|\sum_{0\neq y\in m \Z^n}{\hat{f}(y)}\right| \le 
        \sum_{0\neq y\in m \Z^n}{|\hat{f}(y)|} = \sum_{0\neq y\in m \Z^n}{\left|\hat{f}(y)|y|^{2N}\right|\frac{1}{|y|^{2N}}} \\ &\stackrel{\hat{f}\text{ is Schwartz}}\le C\cdot \sum_{0\neq y\in m \Z^n}{\frac{1}{|y|^{2N}}} 
        =C \cdot \sum_{r=m}^{\infty}{\sum_{y\in A_r^m}{\frac{1}{|y|^{2N}}}} \stackrel{\eqref{annulus}}
        \le m^{n+2-2N} \cdot 3^nC\cdot \sum_{r=m}^\infty{\frac{1}{r^2}} \\
        &   = o(m^{n+2-2N}) \text{ as $m \to \infty$}\,.
    \end{align*}  
To finish, take $N = n+1$.
\end{proof}

\begin{remark}
        Although we do not require it, we have actually shown that 
        \[
            \sum_{x\in m^{-1}\Z^n}{f(x)} = m^n\int_{\R^n}{f(x)dx} + o(m^{-\infty})
        \]
        as $m\to\infty$.
\end{remark}

\begin{proposition}{}
    \label{lattice_asymptotics}
    Let $M$ be a closed integral-integral affine manifold of dimension $n \geq 1$. Then
    \begin{align*}
        L_M(m)=\vol(M)m^n+o(1)
    \end{align*}
    as $m \rightarrow \infty$.
\end{proposition}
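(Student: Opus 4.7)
The plan is to reduce the global statement to the local Riemann-sum estimate of Proposition~\ref{riemman_sum} via a smooth partition of unity subordinate to a cover of $M$ by integral charts.

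First I would choose a finite cover $\{U_i,\varphi_i\}_{i=1}^k$ of $M$ by (domains of) integral charts, which exists because $M$ is compact. I then pick a smooth partition of unity $\{\rho_i\}_{i=1}^k$ subordinate to this cover, with $\supp(\rho_i)\subseteq U_i$ compact. For each $i$, define
\[
   f_i \colon \R^n \to \R, \qquad f_i(y) = \begin{cases} \rho_i(\varphi_i^{-1}(y)) & y \in \varphi_i(U_i) \\ 0 & \text{otherwise.}\end{cases}
\]
Since $\varphi_i(\supp(\rho_i))$ is a compact subset of the open set $\varphi_i(U_i)\subseteq \R^n$, the function $f_i$ is smooth and compactly supported on $\R^n$.

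Next I would use the partition of unity to rewrite the lattice count. For each $m$, since $\sum_i \rho_i(x)=1$ for every $x\in M$, and since $M_{m^{-1}\Z}$ is finite (by the first proposition of Section 3), we have
\[
   L_M(m) = \sum_{x\in M_{m^{-1}\Z}} \sum_{i=1}^k \rho_i(x) = \sum_{i=1}^k \sum_{x\in M_{m^{-1}\Z}\cap U_i} \rho_i(x).
\]
By Lemma~\ref{if one then all}, $x\in M_{m^{-1}\Z}\cap U_i$ iff $\varphi_i(x)\in m^{-1}\Z^n\cap \varphi_i(U_i)$, so the inner sum equals $\sum_{y\in m^{-1}\Z^n} f_i(y)$ (the terms outside $\varphi_i(U_i)$ vanish). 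Applying Proposition~\ref{riemman_sum} to each $f_i$ gives
\[
   \sum_{y\in m^{-1}\Z^n} f_i(y) = m^n \int_{\R^n} f_i(y)\,dy + o(1).
\]

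Finally I would assemble the pieces. Because each chart $\varphi_i$ is measure preserving (Remark~\ref{measure_atlas}), $\int_{\R^n} f_i(y)\,dy = \int_{U_i}\rho_i\,d\mu = \int_M \rho_i\,d\mu$. Summing over $i$ and using $\sum_i \rho_i \equiv 1$ yields
\[
   L_M(m) = m^n \sum_{i=1}^k \int_M \rho_i\,d\mu + o(1) = m^n \vol(M) + o(1),
\]
as desired. There is no real obstacle here beyond bookkeeping; the slightly delicate point is confirming that the cut-off by $\rho_i$ converts an intrinsically defined sum over $M_{m^{-1}\Z}$ into a genuine sum over $m^{-1}\Z^n$ of a Schwartz function on $\R^n$, which is precisely what Lemma~\ref{if one then all} and the compactness of $\supp(\rho_i)$ in $U_i$ guarantee.
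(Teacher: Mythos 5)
Your proposal is correct and follows essentially the same route as the paper: a finite cover by integral charts, a subordinate partition of unity, extension of each $\rho_i\circ\varphi_i^{-1}$ by zero to a compactly supported smooth function on $\R^n$, Proposition~\ref{riemman_sum} applied chartwise, and reassembly via Lemma~\ref{if one then all} and measure preservation. No meaningful differences to report.
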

\begin{proof}

Because $M$ is locally compact, it has an integral-integral affine atlas
whose chart domains have compact closures in $M$.
Because $M$ is compact, it has such an atlas that is finite.
Let $\{\rho_i\}_{i=1}^{k}$ be a partition of unity
that is subordinate to the cover of $M$ by the domains of the charts
in such an atlas.
For each $i \in \{1,\ldots,k\}$,
let $\psi_{i} \colon U_{i} \to \Omega_{i} \subseteq \R^n$
be an integral chart whose domain contains
the support $\supp \rho_i$ of $\rho_i$ in $M$.
Then the subset $\psi_{i}(\supp \rho_i)$ of $\Omega_i$ is compact,
so $\rho_i \circ \psi_{i}^{-1}$
extends by zero to a compactly supported
smooth function on $\R^n$.
By Proposition \ref{riemman_sum}, we have (see Remark~\ref{measure_atlas}):
\begin{align*}
        &
        \sum_{x\in M_{m^{-1}\Z}}{\rho_i(x)}=\sum_{y\in m^{-1}\Z^n \cap \psi_{i}(U_{i})}{\rho_i(\psi^{-1}_{i}(y))}=m^{n}\int_{\psi_i(U_{i})}{(\rho_i\circ\psi^{-1}_{i})(y)dy}+o(1)\\
        & \ \qquad
        =m^n\int_{M}{\rho_i(x)dx}+o(1).
\end{align*}
    Thus
    \begin{align*}
        &L_M(m)=\#M_{m^{-1}\Z}=\sum_{x\in M_{m^{-1}\Z}}{1}=\sum_{x\in M_{m^{-1}\Z}}{\sum_{i=1}^{k}{\rho_i(x)}}=\sum_{i=1}^{k}{\sum_{x\in M_{m^{-1}\Z}}{\rho_i(x)}}\\
        &=\sum_{i=1}^{k}{\left(m^n\int_M{\rho_i(x)dx}+o(1)\right)}
        =m^n\left(\sum_{i=1}^{k}{\int_M{\rho_i(x)dx}}\right)+o(1)
        =\vol(M)m^n+o(1).
    \end{align*}
\end{proof}

\begin{proposition}
    \label{quasi_vanishes}
    Let $p \colon \N \to \R$
    be a quasi-polynomial function such that $p(m)=o(1)$ as $m \to \infty$. Then $p\equiv0$.
\end{proposition}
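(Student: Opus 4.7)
The plan is to reduce to the case of a single polynomial by restricting to arithmetic progressions. Let $T \in \mathbb{N}$ be a common period of the coefficient functions $a_0, \ldots, a_n$ (one may take $T$ to be the least common multiple of their individual periods). For each residue $r \in \{0, 1, \ldots, T-1\}$, the values $a_k(r)$ are well-defined constants, so on the arithmetic progression $\{r + jT : j \in \mathbb{N}\}$ the function $p$ agrees with the \emph{genuine} polynomial $q_r(m) \coloneq \sum_{k=0}^n a_k(r) m^k \in \mathbb{R}[m]$.

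Next I would argue that each $q_r$ is the zero polynomial. Indeed, because $p(m) = o(1)$, the values $q_r(r + jT) = p(r+jT)$ tend to $0$ as $j \to \infty$. But a nonzero real polynomial of degree $d$ satisfies $|q(m)| \sim |c_d| \cdot |m|^d$ as $m \to \infty$, where $c_d$ is its leading coefficient, so it cannot tend to $0$ along any sequence of integers going to infinity. Hence $q_r \equiv 0$, which forces $a_k(r) = 0$ for every $k$.

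Since this holds for every residue $r$ modulo $T$ and each $a_k$ is $T$-periodic, we conclude $a_k \equiv 0$ for all $k$, and therefore $p \equiv 0$.

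There is no real obstacle here; the only substantive point is the elementary observation that a nonzero polynomial cannot vanish in the limit along an infinite arithmetic progression, which follows immediately from the asymptotic behaviour of its leading term.
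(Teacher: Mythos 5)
Your proof is correct and follows essentially the same route as the paper's: restrict $p$ to arithmetic progressions modulo a common period of the coefficient functions, observe that each restriction is a genuine polynomial, and conclude from $p(m)=o(1)$ that each such polynomial (hence each coefficient value $a_k(r)$) vanishes. The only cosmetic difference is that you take the least common multiple of the periods where the paper takes their product, which changes nothing.
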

\begin{proof}
By definition, there are $a_k \colon \N \rightarrow \R$ and $m_k \in\N$ for $0\le k\le \deg p$ such that for all $n \in \N$ and $0 \leq i < m_k$
\[         a_k(i+nm_k)=a_k(i) \]
and for all $m \in \N$
\[ p(m)=\sum_{k=0}^{\deg p}{a_k(m)m^k}\,. \]
    Let $m^*=\prod_{k=1}^{\deg p}{m_k}$, then for every $0 \le i < m^*$ the function $g_i(n)=f(i+nm^*)$ is a polynomial in $n$.
    Since $p(m)=o(1)$ so is $g_i(n)=o(1)$, hence $g_i\equiv 0$ for every $0\le i\le m^*$. Thus $p\equiv0$.
\end{proof}

\begin{corollary} \label{main m} {}
    Let $M$ be an $n$ dimensional closed integral-integral affine manifold. Then 
    its lattice count function is 
    \[
        L_M(m)=\vol(M)m^n
    \]
\end{corollary}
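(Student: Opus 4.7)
The plan is to combine the three preceding results. Set
\[
p(m) \coloneq L_M(m) - \vol(M)\, m^n.
\]
First I would observe that $p$ is a quasi-polynomial on $\N$: by Proposition \ref{manifold_ehrhart}, $L_M$ is quasi-polynomial, and $m \mapsto \vol(M)\, m^n$ is a genuine polynomial (hence a quasi-polynomial with constant coefficients of period $1$), so their difference is quasi-polynomial.

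Next I would invoke Proposition \ref{lattice_asymptotics}, which gives $L_M(m) = \vol(M)\, m^n + o(1)$ as $m \to \infty$; rearranging, $p(m) = o(1)$.

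Finally, Proposition \ref{quasi_vanishes} applies: a quasi-polynomial that tends to zero must be identically zero. Therefore $p \equiv 0$, which is exactly the desired identity
\[
L_M(m) = \vol(M)\, m^n.
\]
There is no real obstacle here; all the work has been done in establishing the rational Ehrhart-type quasi-polynomiality (Proposition \ref{manifold_ehrhart}), the Fourier-analytic Riemann sum estimate (Propositions \ref{riemman_sum} and \ref{lattice_asymptotics}), and the rigidity statement for quasi-polynomials (Proposition \ref{quasi_vanishes}). The corollary is the clean assembly of these pieces. Note that specializing to $m=1$ recovers Theorem \ref{main}.
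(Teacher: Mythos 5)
Your proof is correct and follows exactly the same route as the paper: form the difference $L_M(m)-\vol(M)m^n$, note it is quasi-polynomial by Proposition~\ref{manifold_ehrhart} and $o(1)$ by Proposition~\ref{lattice_asymptotics}, then conclude it vanishes identically by Proposition~\ref{quasi_vanishes}. Your extra remark that $\vol(M)m^n$ is itself a quasi-polynomial (with period-$1$ coefficients) makes explicit a step the paper leaves implicit, but the argument is otherwise identical.
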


\begin{proof}
Proposition \ref{manifold_ehrhart}
implies that the difference
$L_M(m) - \text{vol}(M)m^n$ is quasi-polynomial.
By Proposition \ref{lattice_asymptotics}, 
$L_M(m) - \text{vol}(M)m^n = o(1)$ as $m \to \infty$.
By Proposition~\ref{quasi_vanishes},
these facts imply that $L_M(m) - \text{vol}(M)m^n \equiv 0$.
\end{proof}

\begin{remark}
Asymptotic expansions of a similar nature appeared in \cite{guillemin}, \cite{loizides_semi-classical_2021} and \cite{berline_local_2016}.
The idea to take advantage of quasi-polynomial behavior to show that an asymptotic result is exact appeared 
in \cite{meinrenken_riemann-roch_1996}.
In the future, it might be interesting to seek generalizations
to integral-integral affine manifolds
of Euler-Maclaurin formulas relating sums to integrals \cite{karshon_sternberg,berline_local_2016}.
\end{remark}

We now obtain our main theorem:

\begin{proof}[Proof of Theorem \ref{main}]
Substitute $m=1$ in the result of Corollary \ref{main m}.
The left hand side $L_M(m)$ then becomes the number 
of integral points $M_\Z$, and the right hand side becomes
the volume of $M$.
\end{proof}

\printbibliography

\end{document}